\theoremstyle{plain}
\newtheorem{theorem}{Theorem}[section]
\newtheorem{corollary}{Corollary}
\theoremstyle{definition}
\title{}
\begin{document}
\title [Combinatorial proofs of  Newton-Girard and Chapman-Costas-Santos   identities] {Combinatorial proofs of the Newton-Girard and Chapman-Costas-Santos identities} 
\author[Sajal Kumar Mukherjee]{Sajal Kumar Mukherjee}
\author[Sudip Bera]{Sudip Bera}
\address[Sajal Kumar Mukherjee]{Department of Mathematics, Visva-Bharati, Santiniketan-731235, India.}
\email{shyamal.sajalmukherjee@gmail.com}
\address[Sudip Bera]{Department of Mathematics, Indian Institute of Science, Bangalore 560 012}
\email{sudipbera@iisc.ac.in}
\keywords{determinants; digraphs; Newton-Girard identity; combinatorial proof}
\subjclass[2010]{05A19; 05A05; 05C30; 05C38}
\maketitle

\begin{abstract}
In this paper we give combinatorial proofs of some well known identities and obtain some generalizations. We  give a visual proof of a result of Chapman and Costas-Santos regarding the  determinant of sum of matrices.  Also we find a new identity expressing  permanent of sum of matrices. Besides, we give a graphical interpretation of Newton-Girard identity.
\end{abstract}
\section{Introduction}
In this paper we give combinatorial proofs of some well known identities and obtain some generalizations.  Combinatorial proofs give more insight into  ``why" the result is true rather than ``how" \cite{14, 09,10}.   In Section $2$, we give a graphical interpretation of Newton-Girard identity.  A combinatorial proof of Newton-Girard identity was first given by Doron Zeilberger in \cite{11}. In this paper we have obtained the Newton-Girard identity as a trivial corollary of our more general formulation. Our general formulation gives a relation between weighted sum of closed walk and weighted sum of linear subdigraph of  wighted digraph $\Gamma$. So from our formulation the Newton-Girard identity is exactly same with the relation between weighted sum of closed walk and weighted sum of linear subdigraph of the weighted digraph consisting isolated loops only.  However, to the best of our knowledge  there is no such  general relation between weighted sum of closed walk and weighted sum of  linear subdigraph  of  weighted digraph. In this work we get a new identity expressing the relation between walk and linear subdigraph. 

For a weighted digraph  $\Gamma,$ a \emph{linear subdigraph} $\gamma$  is a  collection of pairwise vertex-disjoint cycles. The number of cycles contained in $\gamma$ is denoted by $c(\gamma)$. The weight of a linear subdigraph $\gamma$ is denoted by $w(\gamma)$.  Denote the set of all linear subdigraph of length $r$ by $L_r$.  Let us denote the sum of the weights of all closed walks of length $r$ by $c_r$. And define 
$\ell_r\triangleq \sum\limits_{\ell\in L_r}(-1)^{c(\ell)}w(\ell).$  Then our  general formulation says that for a weighted digraph $\Gamma$ with $n$ vertices,
\begin{enumerate}
	\item $c_r+c_{r-1}\ell_1+c_{r-2}\ell_2+\cdots+c_{r-n}\ell_n=0, r>n$
	\item $c_r+c_{r-1}\ell_1+c_{r-2}\ell_2+\cdots+r\ell_r=0, 1\leq r\leq n$.
\end{enumerate}
Like  Newton-Girard identity, our general formulation can also be thought as a graph theoretic analogue of the relation between power sum symmetric functions and elementary symmetric functions. So our work  motivates the investigation of possible graph theoretic interpretations for other standard bases of symmetric functions such as complete homogeneous and Schur functions.

Denote by $M_{m,n}$, the set of $m\times n$ matrices over an arbitrary field $F$
and by $M_n$ the set $M_{n,n}$. Consider the $N$-tuple of $n\times n$ matrices $S := (A_1, A_2,\cdots, A_N )$ and $Q\in M_n$. If $N\geq n+1$, then the identity of  Costas-Santos  \cite{12}  says that
\[\sum_{k=0}^N(-1)^k\sum_{\substack{ \sigma\subset[N]\\|\sigma|=k}}\det\left(Q+\sum_{i\in \sigma}A_i\right)=0.\]
Before that in \cite{13}, Chapman proved that 
\[\sum_{k=0}^N(-1)^k\sum_{\substack{ \sigma\subset[N]\\|\sigma|=k}}\det\left(\sum_{i\in \sigma}A_i\right)=0.\]
 However, there is no formula to the best of our knowledge for the permanent of sum of matrices. In this work we get a new formula for the permanent of sum of matrices. 
In section $3$, we give a  visual and combinatorial proof of above identities.
Moreover our method recovers a similar identity for the permanent, which we think to be new.

\section{Graphical interpretation of Newton-Girard identity }
Let $\alpha_1, \alpha_2, \ldots, \alpha_n$ be roots of the polynomial $f(x)=x^n+e_1x^{n-1}+e_2x^{n-2}+\cdots+ e_tx^{n-t}+\cdots+ e_n=0$. Suppose $p_r=\alpha_1^r+\alpha_2^r+\cdots+\alpha_n^r$  $(r=0, 1, \cdots, ) $ so  $p_0=n$. Then  Newton-Girard identity says that
\begin{enumerate}
	\item $p_r+e_1p_{r-1}+e_2p_{r-2}+\cdots+p_1e_{r-1}+re_r=0 (r\leq n)$
	\item $p_r+e_1p_{r-1}+e_2p_{r-2}+\cdots+p_1e_{r-1}+e_np_{r-n}=0 (r> n).$	
\end{enumerate}
In this section we give a graphical interpretation of Newton-Girard identity. In fact the Newton-Girard identity can be deduced as a very special case of our  more general set up.  Before that, let us briefly describe some graph theoretic concepts. See \cite{16} for details. Let $\Gamma$ be a weighted digraph.   A \emph{linear subdigraph} $\gamma$,  of $\Gamma$ is a  collection of pairwise vertex-disjoint cycles. A loop is a cycle of length $1$. So loop around a single vertex is also considered to be a cycle. The weight of a linear subdigraph $\gamma$, written as $w(\gamma)$ is the product of the weights of all its edges. The number of cycles contained in $\gamma$ is denoted by $c(\gamma)$. The length of a linear subdigraph $\gamma$,  denoted by $L(\gamma)$ is the number of edges present in $\gamma$.  Denote the set of all linear subdigraph of length $r$ by $L_r$.  A \emph{walk} in a digraph $\Gamma$ from a vertex $u$ to a vertex $v$ is a sequence of vertices $u=x_0, x_1, \cdots, x_{k-1}, x_k=v $ such that $(x_i, x_{i+1})$ is an edge for $i=0, 1, 2, \cdots, k-1$. The walk is called closed if $u=v$. The length $L(\tilde{w})$ of a walk $\tilde{w}$ is the number of edges present in that walk. The weight $w(\tilde{w})$ of a walk $\tilde{w}$ is the product of all weights of edges present in that walk. Note that when we talk about a closed walk, its initial and final point is automatically specified. Let us denote the sum of the weights of all closed walks of length $r$ by $c_r$. And define
$\ell_r\triangleq \sum\limits_{\ell\in L_r}(-1)^{c(\ell)}w(\ell).$
Now we state the theorem.
\begin{theorem}
For a weighted digraph $\Gamma$ with $n$ vertices, the following identities  hold.
\begin{enumerate}
\item $c_r+c_{r-1}\ell_1+c_{r-2}\ell_2+\cdots+c_{r-n}\ell_n=0, r>n$
\item $c_r+c_{r-1}\ell_1+c_{r-2}\ell_2+\cdots+r\ell_r=0, 1\leq r\leq n$.
\end{enumerate}
\end{theorem}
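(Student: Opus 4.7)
The plan is to prove both identities simultaneously by interpreting the left-hand side as a signed enumeration of pairs $(W, \gamma)$ and constructing a cycle-swapping involution. Let $\mathcal{S}_r$ be the set of pairs $(W, \gamma)$ where $W$ is a closed walk with a specified base point (including the length-$0$ walk at any vertex) and $\gamma$ is a linear subdigraph, subject to $L(W) + L(\gamma) = r$. Weight each pair by $(-1)^{c(\gamma)} w(W) w(\gamma)$. Grouping by $k = L(\gamma)$ immediately gives
$$\sum_{(W,\gamma) \in \mathcal{S}_r} (-1)^{c(\gamma)} w(W) w(\gamma) = \sum_{k=0}^{\min(r,n)} c_{r-k} l_k,$$
with the convention $c_0 = n$ (the $n$ trivial walks).

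The heart of the argument is an involution $\iota$ on $\mathcal{S}_r$. Given $(W, \gamma)$ with $W = x_0, x_1, \ldots, x_\ell$, traverse $W$ from the start and let $j$ be the smallest index such that $x_j$ either coincides with some earlier $x_i$ $(i < j)$ or lies in $V(\gamma)$. In the first case, with $i$ chosen minimal, extract the cycle $C = x_i, x_{i+1}, \ldots, x_j$ from $W$ and move it to $\gamma$; the minimality of $j$ forces $V(C) \cap V(\gamma) = \emptyset$, so $\gamma \cup \{C\}$ is a legitimate linear subdigraph. In the second case, let $C$ be the unique cycle of $\gamma$ through $x_j$, rotate it to begin at $x_j$, and splice it into $W$ at position $j$. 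Each operation changes $c(\gamma)$ by one and preserves the product $w(W)w(\gamma)$, so $\iota$ is sign-reversing and weight-preserving. The two cases cannot occur simultaneously for $j > 0$, because a repeat $x_j = x_i$ with $x_j \in V(\gamma)$ would force $x_i \in V(\gamma)$ in violation of minimality; and at $j = 0$ only the insertion case is available.

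To confirm $\iota \circ \iota = \mathrm{id}$, the key observation is that after extracting a cycle $C$ at position $i$ from $W$, the next traversal first encounters $V(\gamma \cup \{C\})$ at position $i$, triggering the matching insertion that reconstructs $W$; conversely, after inserting $C$ of length $\ell$ at position $j$, the augmented walk visits the fresh vertices of $C$ without any earlier repeat or meeting with $\gamma \setminus \{C\}$, so the next triggering index is $j + \ell$, producing the reverse extraction. The fixed points of $\iota$ are the pairs $(v, \gamma)$ with $L(\gamma) = r$ and $v \notin V(\gamma)$; their signed total is
$$\sum_{L(\gamma) = r} (-1)^{c(\gamma)} w(\gamma)\,(n - L(\gamma)) = (n - r) l_r.$$

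Combining these facts yields the master identity $\sum_{k=0}^{\min(r,n)} c_{r-k} l_k = (n - r) l_r$. For $r > n$ the right side is zero and the surviving terms on the left are exactly $\sum_{k=0}^{n} c_{r-k} l_k$, giving identity (1). For $1 \leq r \leq n$, replacing $c_0 l_r = n l_r$ in the sum and moving $(n-r) l_r$ to the left collapses the equation to $\sum_{k=0}^{r-1} c_{r-k} l_k + r l_r = 0$, which is identity (2). The main obstacle is pinning down the traversal rule so that the extraction and insertion steps are genuinely mutual inverses and the "first trigger" index is canonical; once the non-collision of the two cases is checked, the rest of the argument is straightforward bookkeeping.
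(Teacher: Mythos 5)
Your proof is correct and rests on exactly the same engine as the paper's: the sign-reversing, weight-preserving involution that transfers a cycle between the closed walk and the linear subdigraph, triggered at the first repeated vertex or first meeting with $\gamma$. The only difference is bookkeeping: by admitting length-$0$ walks (so $c_0=n$) you fold both cases into one master identity $\sum_k c_{r-k}l_k=(n-r)l_r$ whose fixed points are the pairs $(v,\gamma)$ with $v\notin V(\gamma)$, whereas the paper proves (1) directly and handles (2) by adding the term $rl_r$ separately and counting the $r$ ``GOOD'' pairs arising from each linear subdigraph of length $r$ --- the same cancellation in a slightly different dress.
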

\begin{proof}
First we prove the case $r>n$. To prove this consider all ordered pair $(c, \gamma)$, where $c$ is a closed walk and $\gamma$ is a linear subdigraph (possibly empty), such that $L(c)+L(\gamma)=r$. Define the weight $W$ of $(c, \gamma)$ to be $W((c, \gamma))=(-1)^{c(\gamma)}w(c)w(\gamma)$. Note that the left hand side of $(1)$ is precisely equal to $\sum\limits_{(c, \gamma)}W((c, \gamma))$, where the summation runs over all ordered pairs $(c, \gamma)$ as described above.

Now the crucial observation is that,  since $r>n,$ either $c$ and $\gamma$ share a common vertex or $c$ is not a ``simple" closed walk (here simple means the graph structure of the closed walk is a directed cycle). Now take a particular pair $(c, \gamma)$ satisfying the above conditions. Suppose that $x$ is the initial and terminal vertex of $c$. Start moving from $x$ along $c$.  There are two possibilities: either, first we meet a vertex $y$ which is a vertex of $\gamma$ or, we complete a closed directed cycle $\acute{c}$ which is a subwalk of $c$ and during this journey from $x$ up to the completion of $\acute{c}$ we have not met any vertex of $\gamma$. Now if the first case holds, we form a new ordered pair $(\tilde{c}, \tilde{\gamma})$, where $\tilde{c}=\widehat{xy}|_{c}\bigodot \gamma_{y}\bigodot \widehat{yx}|_{c}$ and $\tilde{\gamma}=\gamma\setminus\{\gamma_y\}$, where $\widehat{xy}|_{c}$ is the walk from $x$ to $y$ along $c$ and $\gamma_y$ is the directed cycle of $\gamma$ containing the vertex $y$. Note that $W((\tilde{c}, \tilde{\gamma}))=-W((c, \gamma))$. Now if the second case holds, then form a new ordered pair $(\tilde{\tilde{c}}, \tilde{\tilde{\gamma}})$, where $\tilde{\tilde{c}}$ is formed by removing the directed cycle $\acute{c}$ from $c$ and $\tilde{\tilde{\gamma}}$ is $\gamma\cup \acute{c}$. Note also that $W((\tilde{\tilde{c}}, \tilde{\tilde{\gamma}}))=-W((c, \gamma))$. It is easy to see that,  this is in fact an involution and it is sign reversing by the above observation. This completes the proof.

Now we prove the case $r\leq n$. Let $A=\{ (c, \gamma):c$ is a closed walk of length $\geq 1$ and $\gamma$ is linear subdigraph (possibly empty) and $L(c)+L(\gamma)=r\}$. Consider the following sum $S=\sum\limits_{(c, \gamma)\in A}W((c, \gamma))+r\ell_r$. Note that  the left hand side of $(2)$ is precisely equal to $S$.

Consider the subset of $A$ consisting of ordered pair $(c, \gamma)$ satisfying the conditions: either $c\cap \gamma\neq \phi$ or $c$ is not a simple closed walk. Call this subcollection \emph{BAD}. So the \emph{GOOD} members of $A$ are the ordered pairs $(c, \gamma)$ satisfying $c\cap \gamma=\phi$ and $c$ is a simple closed walk. Now observe that,  the weights of the BAD members cancel among themselves just like the previous case (case, $r>n$). Now let us see,  how a GOOD member looks like. As a directed graph it is just a disjoint collection of distinct cycles with vertex set $\{ v_1, v_2, \cdots, v_r\}$ i. e. it is a linear subdigraph $\dot{\gamma}$ with vertex set $\{v_1, v_2, \cdots, v_r\}$. Now for this fixed  linear subdigraph $\dot{\gamma}$ with vertex set $\{v_1, v_2, \cdots, v_r\}$, we claim that there are precisely $r$ GOOD members $(c, \gamma)$. For the proof, take any vertex say $v_i$ from $\dot{\gamma}$. Consider the cycle $c$ in $\dot{\gamma}$ containing the vertex $v_i$. Let $\gamma_1=\dot{\gamma}\setminus \{ c\}$. Now the cycle $c$ can be thought of as a closed walk $c_{v_i}$ starting and ending at the vertex $v_i$. So we get a GOOD member $(c_{v_i}, \gamma_1)$. Since $v_i$ is arbitrary the claim follows.

The main observation is that the sum of the weights of all the good members, found in this way from $\dot{\gamma}$ is $r(-1)^{c(\dot{\gamma})-1}w(\dot{\gamma})$. This cancels with the term $r(-1)^{c(\dot{\gamma})}w(\dot{\gamma})$ in the equation $S=\sum\limits_{(c, \gamma)\in A}W((c, \gamma))+r\ell_r$.
\end{proof}

\begin{corollary}[Newton-Girard identity]
 Let $\alpha_1, \alpha_2, \ldots, \alpha_n$ be roots of the polynomial $f(x)=x^n+e_1x^{n-1}+e_2x^{n-2}+\cdots+ e_tx^{n-t}+\cdots+ e_n=0$. Suppose $p_r=\alpha_1^r+\alpha_2^r+\cdots+\alpha_n^r$  $(r=0, 1, \cdots, )$ so $p_0=n$. Then Newton-Girard identity says that
\begin{enumerate}
\item $p_r+e_1p_{r-1}+e_2p_{r-2}+\cdots+p_1e_{r-1}+re_r=0 (r\leq n)$
\item $p_r+e_1p_{r-1}+e_2p_{r-2}+\cdots+p_1e_{r-1}+e_np_{r-n}=0 (r> n)$	
\end{enumerate}
\end{corollary}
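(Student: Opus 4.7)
The plan is to deduce Newton's identities as a trivial specialization of the preceding theorem by choosing a weighted digraph whose closed-walk and linear-subdigraph statistics literally reproduce the classical power sums and elementary symmetric functions.

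First I would build the digraph $\Gamma$ on vertex set $\{v_1,\ldots,v_n\}$, one vertex per root of $f$, whose \emph{only} edges are a loop at each $v_i$ carrying weight $\alpha_i$ (the $i$th root). This choice is dictated by what one wants at the end: closed walks should accumulate pure powers $\alpha_i^r$, and linear subdigraphs should cut out subsets of the $\alpha_i$'s with no interaction between different vertices.

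Next I would read off $c_r$ and $l_r$ for this $\Gamma$. Because the only edges available are loops, a closed walk of length $r$ is forced to sit at a single vertex $v_i$ and traverse its loop $r$ times; summing over $i$ gives $c_r = \sum_i \alpha_i^r = p_r$. The only cycles in $\Gamma$ are loops, so a linear subdigraph of length $r$ is just a size-$r$ subset $S\subseteq\{v_1,\ldots,v_n\}$ together with the corresponding loops, contributing $c(\gamma)=r$ cycles of total weight $\prod_{i\in S}\alpha_i$. Substituting the resulting identifications $c_r=p_r$ and $l_r=e_r$ into the two parts of the theorem produces the two parts of the corollary on the nose.

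The only step requiring any attention is the sign bookkeeping in the identification $l_r = e_r$: the factor $(-1)^{c(\gamma)}=(-1)^r$ appearing in the definition of $l_r$ must cancel exactly against the sign already built into $e_r$ via Vieta's formula for $f(x)=\prod_i(x-\alpha_i)=x^n+e_1x^{n-1}+\cdots+e_n$, where each $e_r$ carries an intrinsic $(-1)^r$ relative to the standard elementary symmetric polynomial $\sigma_r=\sum_{|S|=r}\prod_{i\in S}\alpha_i$. Once this minor convention is matched up correctly, the derivation is mechanical; all the genuine combinatorial content has already been carried out inside the proof of the theorem, and nothing remains for the corollary but a dictionary translation.
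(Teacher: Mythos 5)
Your proposal is correct and matches the paper's proof, which likewise obtains the corollary by applying the theorem to the digraph whose only edges are loops at $n$ vertices weighted by the roots $\alpha,\beta,\ldots,k$, so that $c_r=p_r$ and $l_r=(-1)^r\sigma_r=e_r$. Your explicit check of the sign convention via Vieta's formula is exactly the dictionary the paper leaves implicit in its figure.
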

%\begin{figure}[ht!]
%\tiny
%\tikzstyle{ver}=[]
%\tikzstyle{vert}=[circle, draw, fill=black!100, inner sep=0pt, minimum width=4pt]
%\tikzstyle{vertex}=[circle, draw, fill=black!00, inner sep=0pt, minimum width=4pt]
%\tikzstyle{edge} = [draw,thick,-]
%\tikzstyle{edge_style} = [draw=black, line width=2, ultra thick]
%\tikzstyle{node_style} = [circle,draw=blue,fill=blue!20!,font=\sffamily\Large\bfseries]
%\centering
%\tikzset{->-/.style={decoration={
			%	markings,
%				mark=at position #1 with {\arrow{>}}},postaction={decorate}}}
	
%\begin{tikzpicture}[scale=1]
%\tikzstyle{edge_style} = [draw=black, line width=2, ultra thick]
%\tikzstyle{node_style} = [circle,draw=blue,fill=blue!20!,font=\sffamily\Large\bfseries]
%\draw (0,0)  circle (1cm);
%\draw (3,0) circle (1cm);
%\draw [dashed] (4.5,0)--(5,0);
%\draw (7,0) circle (1cm);	
%\node (1) at (0,-1.2)  {$\alpha$};
%\node (1) at (3,-1.2)  {$\beta$};
%\node (1) at (7,-1.2)  {$\gamma$};
%\end{tikzpicture}
%\caption{}
%\label{fig:f1}	
%\end{figure}

\begin{proof}
Just apply the above theorem on the  weighted digraph consisting of $n$ disjoint loops with weight $\alpha_1, \alpha_2, \ldots, \alpha_n$ respectively, we get the Newton-Girard identity.
\end{proof}

\section{Combinatorial proof of Chapman-Costas-Santos identity}
In this section, we prove  interesting identities about the determinant and permanent of sum of matrices. Before that we have to develop some necessary background. For details we refer  \cite{15}.  Let $G$ be a weighted, acyclic digraph.  We call  $A=\{ A_1, A_2, \cdots, A_n\}$ to be the initial set of vertices  and $B=\{B_1, B_2, \cdots, B_n\}$ to be the terminal set of vertices (not necessarily disjoint) of the graph $G$. To $A$ and $B$, associate the \emph{path matrix} $M=(m_{ij})$, where $m_{ij}=\sum\limits_{P:A_i\rightarrow B_j}w(P),$ [$w(P)$ is the product of weights of all edges involved in the path $P$]. A \emph{path system} $\mathcal{P}$ from $A$ to $B$ consists of a permutation $\sigma$ and $n$ paths $P_i: A_i\rightarrow B_{\sigma(i)}$, with $\text{sgn} (\mathcal{P})=\text{sgn} (\sigma)$. The \emph{weight} of $\mathcal{P}$ is $w(\mathcal{P})=\prod_{i=1}^nw(P_i)$.  Now it easy to see that ${\det}(M)=\sum\limits_{\mathcal{P}}\text{sgn}(\mathcal{P})w(\mathcal{P})$ 
and $\operatorname{per}(M)=\sum\limits_{\mathcal{P}}w(\mathcal{P})$.

For the $N$-tuple of $n\times n$ matrices $S := (A_1, A_2,\cdots, A_N )$ and an $n\times n$ matrix $Q=(q_{i,j})$ with  $N\geq n+1$,  Costas-Santos \cite{12} proved
 \[\sum_{k=0}^N(-1)^k\sum_{\substack{ \sigma\subset[N]\\|\sigma|=k}}\det\left(Q+\sum_{i\in \sigma}A_i\right)=0\] and before that Chapman \cite{13} proved
  \[\sum_{k=0}^N(-1)^k\sum_{\substack{ \sigma\subset[N]\\|\sigma|=k}}\det\left(\sum_{i\in \sigma}A_i\right)=0\] 
  by using algebraic manipulations. In this section we give a purely combinatorial proof of his identity and the same identity for permanent.

  \begin{figure}[ht!]
  	\tiny
  	\tikzstyle{ver}=[]
  	\tikzstyle{vert}=[circle, draw, fill=black!100, inner sep=0pt, minimum width=4pt]
  	\tikzstyle{vertex}=[circle, draw, fill=black!00, inner sep=0pt, minimum width=4pt]
  	\tikzstyle{edge} = [draw,thick,-]
  	%\tikzstyle{edge_style} = [draw=black, line width=2, ultra thick]
  	\tikzstyle{node_style} = [circle,draw=blue,fill=blue!20!,font=\sffamily\Large\bfseries]
  	\centering
  	\tikzset{->,>=stealth', auto,node distance=1cm,
  		thick,main node/.style={circle,draw,font=\sffamily\Large\bfseries}}
  	\tikzset{->-/.style={decoration={
  				markings,
  				mark=at position #1 with {\arrow{>}}},postaction={decorate}}}
  	
  	\begin{tikzpicture}[scale=1]
  	\tikzstyle{edge_style} = [draw=black, line width=2mm, ]
  	\tikzstyle{node_style} = [draw=blue,fill=blue!00!,font=\sffamily\Large\bfseries]
  	\node (B1) at (-.3,0)   {$\bf{B_1}$};
  	\node (B2) at (-.3,1.5)  {$\bf{B_3}$};
  	\node (B3) at (1.9,0)   {$\bf{B_2}$};
  	\node (B4) at (1.9,1.5)   {$\bf{B_4}$};
  	\node (A1) at (-6.4,0)   {$\bf{A_1}$};
  	\node (A2) at (-4,0)   {$\bf{A_2}$};
  	\node (A3) at (-6.4,1.5)  {$\bf{A_3}$};
  	\node (A4) at (-4,1.4)  {$\bf{A_4}$};
  	\node (C1) at (4.7,.1)   {$\bf{C_1}$};
  	\node (C2) at (6.8,0)  {$\bf{C_2}$};
  	\node (C3) at (4.6,1.4) {$\bf{C_3}$};
  	\node (C4) at (6.8,1.5) {$\bf{C_4}$};
  	\node (X1) at (0,4.2) {$\bf{X_1}$};
  	\node (X2) at (1,4.2) {$\bf{X_2}$};
  	\node (Y1) at (0,-4.2) {$\bf{Y_1}$};
  	\node (Y2) at (1,-4.2) {$\bf{Y_2}$};
  	\node (A) at (-6,-2)  {$\textbf{A}$};
  	\node (B) at (.5,-2)  {$\textbf{B}$};
  	\node (C) at (6,-2)  {$\textbf{C}$};
  	\node (1) at (-.4,1)  {$\bf{b_{11}}$};
  	\node (1) at (1.9,1) {$\bf{b_{22}}$};
  	\node (1) at (.72,.1)  {$\bf{b_{21}}$};
  	\node (1) at (.6,1.35)  {$\bf{b_{12}}$};
  	
  	%\draw[->, line width=.2 mm] (4,2) -- (2,0);
  	\draw[->, line width=.2 mm] (0,1.5) -- (-.01,.02);
  	\draw[->, line width=.2 mm] (0,1.5)--(1.5,0);
  	\draw[->, line width=.2 mm] (1.5,1.5)--(0,-.01);
  	\draw[->, line width=.2 mm] (1.5,1.5)--(1.53,0);
  	
  	%\draw (-1,-1) -- (2.2,-1) -- (2.2,2) -- (-1,2) -- (-1,-1);
  	\draw  (-1,-1) rectangle (2.4,2);
  	
  	\draw[->, line width=.2 mm] (-4.5,1.5) -- (-4.47,0);
  	\draw[->, line width=.2 mm] (-4.5,1.5)--(-6,0);
  	\draw[->, line width=.2 mm] (-6,1.5)--(-6.03,0);
  	\draw[->, line width=.2 mm] (-6,1.5)--(-4.5,0);
  	\node (1) at (-6.4,1)  {$\bf{a_{11}}$};
  	\node (1) at (-4,1)  {$\bf{a_{22}}$};
  	\node (1) at (-5.3,.2)  {$\bf{a_{21}}$};
  	\node (1) at (-5.2,1.3)  {$\bf{a_{12}}$};
  	%\draw (-7,-1) -- (-3.5,-1) -- (-3.5,2) -- (-7,2) -- (-7,-1);
  	\draw  (-7,-1) rectangle (-3.5,2);

  	\draw[->, line width=.2 mm] (5,1.5) -- (4.97,0);
  	\draw[->, line width=.2 mm] (5,1.5)--(6.5,0);
  	\draw[->, line width=.2 mm] (6.5,1.5)--(6.53,0);
  	\draw[->, line width=.2 mm] (6.5,1.5)--(5,0);
  	\node (1) at (4.5,1)  {$\bf{c_{11}}$};
  	\node (1) at (6.9,1)  {$\bf{c_{22}}$};
  	\node (1) at (5.6,.2)  {$\bf{c_{21}}$};
  	\node (1) at (5.6,1.3)  {$\bf{c_{12}}$};
  	%\draw (4,-1) -- (7,-1) -- (7,2) -- (4,2) -- (4,-1);
  	\draw  (4,-1) rectangle (7.3,2);

  	\draw (.5,4) ellipse (3cm and 1cm);
  	\draw (.5,-4) ellipse (3cm and 1cm);
  	
  	\draw[->, line width=.2 mm] (0,4) -- (-6,1.5);
  	\draw[->, line width=.2 mm] (1,4) -- (-4.5,1.5);
  	\draw[->, line width=.2 mm] (0,4) -- (0,1.5);
  	\draw[->, line width=.2 mm] (1,4) -- (1.5,1.5);
  	\draw[->, line width=.2 mm] (0,4) -- (5,1.5);
  	\draw[->, line width=.2 mm] (1,4) -- (6.5,1.5);
  	\draw[->, line width=.2 mm] (-6,0) -- (0,-4);
  	\draw[->, line width=.2 mm] (-4.5,0) -- (1,-4);
  	\draw[->, line width=.2 mm] (1.5,0) -- (1,-4);
  	\draw[->, line width=.2 mm] (0,0) -- (0,-4);
  	\draw[->, line width=.2 mm] (5,0) -- (0.01,-4);
  	\draw[->, line width=.2 mm] (6.5,0) -- (1.02,-4);

  	%\draw[->-=.5] (0,1.5) -- (0,0);
  	%\draw[->-=.9] (0,1.5)--(1.5,0);
  	%\draw[->-=.5] (1.5,1.5)--(0,0);
  	%\draw[->-=.8] (1.5,1.5)--(1.5,0);
  	
  	%\draw (-1,-1) -- (2.2,-1) -- (2.2,2) -- (-1,2) -- (-1,-1);
  	%\draw[->-=.5] (-4.5,1.5) -- (-4.5,0);
  	%\draw[->-=.9] (-4.5,1.5)--(-6,0);
  	%\draw[->-=.5] (-6,1.5)--(-6,0);
  	%\draw[->-=.8] (-6,1.5)--(-4.5,0);
  	%\node (1) at (-6.2,1)  {$a_{11}$};
  	%\node (1) at (-4,1)  {$a_{22}$};
  	%\node (1) at (-5.3,.3)  {$a_{21}$};
  	%\node (1) at (-5.6,1.3)  {$a_{12}$};
  	%\draw (-7,-1) -- (-3.5,-1) -- (-3.5,2) -- (-7,2) -- (-7,-1);
  	
  	%\draw[->-=.5] (5,1.5) -- (5,0);
  	%\draw[->-=.9] (5,1.5)--(6.5,0);
  	%\draw[->-=.5] (6.5,1.5)--(6.5,0);
  	%\draw[->-=.8] (6.5,1.5)--(5,0);
  	%\node (1) at (4.5,1)  {$c_{11}$};
  	%\node (1) at (6.7,1)  {$c_{22}$};
  	%\node (1) at (6,.3)  {$c_{21}$};
  	%\node (1) at (5.6,1.3)  {$c_{12}$};
  	%\draw (4,-1) -- (7,-1) -- (7,2) -- (4,2) -- (4,-1);
  	
  	%\draw (.5,4) ellipse (3cm and 1cm);
  	%\draw (.5,-4) ellipse (3cm and 1cm);
  	
  	%\draw[->-=.5] (0,4) -- (-6,1.5);
  	%\draw[->-=.5] (1,4) -- (-4.5,1.5);
  	%\draw[->-=.5] (0,4) -- (0,1.5);
  	%\draw[->-=.5] (1,4) -- (1.5,1.5);
  	%\draw[->-=.5] (0,4) -- (5,1.5);
  	%\draw[->-=.5] (1,4) -- (6.5,1.5);
  	%\draw[->-=.5] (-6,0) -- (0,-4);
  	%\draw[->-=.5] (-4.5,0) -- (1,-4);
  	%\draw[->-=.5] (1.5,0) -- (1,-4);
  	%\draw[->-=.5] (0,0) -- (0,-4);
  	%\draw[->-=.5] (5,0) -- (0,-4);
  	%\draw[->-=.5] (6.5,0) -- (1,-4);

  	\node (1) at (-3,3)  {$\bf{1}$};
  	\node (1) at (-3,2.4)  {$\bf{1}$};
  	\node (1) at (-3,-2.2)  {$\bf{1}$};
  	\node (1) at (-3,-1.4) {$\bf{1}$};
  	\node (1) at (-.4,2.2)  {$\bf{1}$};
  	\node (1) at (-.4,-2)  {$\bf{1}$};
  	\node (1) at (1.5,2.2) {$\bf{1}$};
  	\node (1) at (1.5,-2.2)  {$\bf{1}$};
  	\node (1) at (3.6,3.1)  {$\bf{1}$};
  	\node (1) at (3,2.2)  {$\bf{1}$};
  	\node (1) at (3,-2.2)  {$\bf{1}$};
  	\node (1) at (3,-1.4)  {$\bf{1}$};
  	
  	\end{tikzpicture}
  	\caption{First box, second box and third box are denoted by $\textbf{A, B, C}$ respectively.}
  	\label{fig:f2}	
  \end{figure}

\begin{theorem}
Consider the $N$-tuple of $n$-by-$n$ matrices $S := (A_1, A_2,\cdots, A_N )$. If $N\geq n+1$, then the following relations hold.
\begin{enumerate}
\item $\sum\limits_{k=0}^N(-1)^k\sum\limits_{\substack{ \sigma\subset[N]\\|\sigma|=k}}\det\left(\sum\limits_{i\in \sigma}A_i\right)=0.$
\item $\sum\limits_{k=0}^N(-1)^k\sum\limits_{\substack{ \sigma\subset[N]\\|\sigma|=k}}\operatorname{per}\left(\sum\limits_{i\in \sigma}A_i\right)=0.$
\end{enumerate}
\end{theorem}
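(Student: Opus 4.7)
The plan is to unwind each determinant via multilinearity and recast the whole signed sum as a weighted count over triples, then annihilate it by a single sign-reversing involution. Multilinear expansion in each row gives
\begin{equation*}
\det\!\Bigl(\sum_{i\in\sigma}A_i\Bigr)=\sum_{\pi\in S_n}\sum_{c:[n]\to\sigma}\mathrm{sgn}(\pi)\prod_{j=1}^{n}(A_{c(j)})_{j,\pi(j)},
\end{equation*}
so the left-hand side of part~(1) becomes
\begin{equation*}
\sum_{(\sigma,\pi,c)}(-1)^{|\sigma|}\,\mathrm{sgn}(\pi)\prod_{j=1}^{n}(A_{c(j)})_{j,\pi(j)},
\end{equation*}
where the triples $(\sigma,\pi,c)$ satisfy $\emptyset\ne\sigma\subseteq[N]$, $\pi\in S_n$, $c:[n]\to[N]$, and $\mathrm{Im}(c)\subseteq\sigma$. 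My target is then an involution on these triples that flips $(-1)^{|\sigma|}$ while leaving $\pi$, $c$, and the whole monomial untouched.

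Since $|\mathrm{Im}(c)|\le n<N$, the complement $[N]\setminus\mathrm{Im}(c)$ is always nonempty; let $m(c)$ denote its smallest element. I would then define
\begin{equation*}
\Phi(\sigma,\pi,c)=\bigl(\sigma\,\triangle\,\{m(c)\},\,\pi,\,c\bigr),
\end{equation*}
i.e., toggle the membership of $m(c)$ in $\sigma$ and leave $\pi,c$ alone. The map $\Phi$ preserves the monomial and flips $(-1)^{|\sigma|}$ because $|\sigma|$ changes by exactly one, and since $m(c)\notin\mathrm{Im}(c)$ the containment $\mathrm{Im}(c)\subseteq\sigma\,\triangle\,\{m(c)\}$ is automatic, giving $\Phi^2=\mathrm{id}$. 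Pairing each triple with its $\Phi$-image annihilates the sum and proves part~(1). Part~(2) follows from the very same argument with every $\mathrm{sgn}(\pi)$ deleted: the cancellation lives entirely in the $(-1)^{|\sigma|}$ factor, which is indifferent to the signs of $\pi$, so a single proof handles both identities.

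The main obstacle---and the only place where the hypothesis $N\ge n+1$ is really used---is verifying that $\Phi$ stays inside the domain, i.e., that $\sigma\,\triangle\,\{m(c)\}$ is never empty. The only way to force emptiness is $\sigma=\{m(c)\}$, which would require $\mathrm{Im}(c)\subseteq\{m(c)\}$ and hence (since $n\ge 1$) $\mathrm{Im}(c)=\{m(c)\}$, contradicting $m(c)\notin\mathrm{Im}(c)$; so this configuration is vacuous. A more pictorial rendering, in keeping with the paper's style, is to view each triple as a vertex-disjoint perfect matching in the complete bipartite digraph on $\{A_1,\ldots,A_n\}\sqcup\{B_1,\ldots,B_n\}$ equipped with $N$ colored parallel edges between each pair of endpoints; under this translation $\Phi$ becomes the operation ``toggle the smallest color missing from the matching in or out of $\sigma$,'' which is visibly a sign-reversing involution on colored path systems.
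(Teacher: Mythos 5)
Your proof is correct. The multilinear expansion of $\det(\sum_{i\in\sigma}A_i)$ into triples $(\sigma,\pi,c)$ is valid, the map $\Phi$ toggling the least index $m(c)\notin\mathrm{Im}(c)$ is a well-defined, fixed-point-free, sign-reversing involution on the stated domain (your check that $\sigma=\{m(c)\}$ cannot occur, and that the hypothesis $N\ge n+1$ is exactly what guarantees $[N]\setminus\mathrm{Im}(c)\neq\emptyset$, are the right points to verify), and dropping $\mathrm{sgn}(\pi)$ gives the permanent statement for free. Your route differs from the paper's in packaging, though the engine is the same fact that each monomial uses at most $n<N$ of the matrices. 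The paper realizes $A_1+\cdots+A_N$ as the path matrix of a layered digraph in which each $A_i$ is a ``box'' of intermediate vertices, observes that the set of path systems meeting all $N$ boxes is empty because a path system has only $n$ paths, and then evaluates this empty set by inclusion--exclusion over which boxes are avoided; the argument is presented pictorially and only for $N=3$, $n=2$, with the general case asserted to be similar. You instead work directly with the Leibniz-type expansion and cancel all terms with one explicit involution, which amounts to inlining the inclusion--exclusion step as the identity $\sum_{\sigma\supseteq\mathrm{Im}(c),\,\sigma\neq\emptyset}(-1)^{|\sigma|}=0$ whenever $\mathrm{Im}(c)\subsetneq[N]$. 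What each buys: the paper's version is visual and fits its graph-theoretic theme, while yours is a complete, uniform proof for arbitrary $n$ and $N\ge n+1$ that needs no digraph model or Gessel--Viennot-style setup; both treat determinant and permanent by the same cancellation, since the sign of $\pi$ is inert throughout.
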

\begin{proof}
For the sake of simplicity we take $N=3$, $n=2$ and we prove these identities for three matrices $A=(a_{i,j})_{2\times 2}, B=(b_{i,j})_{2\times 2}, C=(c_{i, j})_{2\times 2}$ . The proofs of these  two identities follow from the digraph in Figure \ref{fig:f2} and  the \emph{Principle of Inclusion and Exclusion (PIE)}.

Look at the boxes labeled by $\textbf{A, B, C}$  in Figure \ref{fig:f2}. From this figure,   clearly the path matrix (paths from vertex set $X=\{x_1, x_2\}$ to the vertex set $Y=\{y_1, y_2\}$) of the  graph is the matrix $A+B+C$. Let $\mathcal{P}_{ABC}=\{ \mathcal{P},$ a path system from $X$ to $Y$ such that each box contains atleast $2$ intermediate vertices of some paths in $\mathcal{P}$$ \}$. Then $\sum\limits_{\mathcal{P\in \mathcal{P}_{ABC}}} \text{sgn}(\mathcal{P})w(\mathcal{P})=0$, as $\mathcal{P}_{ABC}=\varnothing$ (because each path system contains exactly two paths which can not pass through all three boxes simultaneously). Now we compute this empty sum by \emph{PIE}.  The signed sum of  weights of all possible path systems is $\det(A+B+C)$. The signed sum of  weights of all possible path systems,  whose underlying paths do not pass through box $\textbf{C}$ is $\det(A+B)$. Similarly, the signed sum of  weights of all possible path systems,  whose underlying paths do not pass through box $\textbf{B}$ is $\det(A+C)$ and the signed sum of  weights of all possible path systems,  whose underlying paths do not pass through box $\textbf{A}$ is $\det(B+C)$. Again the signed sum of  weights of all possible path systems, whose underlying paths pass through neither $B$ nor $C$ is $\det(A)$. Proceeding  this way and using the \emph{PIE} we get,
\begin{align*}
&\det(A+B+C)-\det(A+B)-\det(A+C)-\det(B+C)+\det(A)+\det(B)+\det(C)\\
=&\sum_{\mathcal{P}\in \mathcal{P_{ABC}}}\text{sgn}(\mathcal{P})w(\mathcal{P})=0.
\end{align*}
Similarly,
\[\operatorname{per}(A+B+C)-\operatorname{per}(A+B)-\operatorname{per}(A+C)-\operatorname{per}(B+C)+\operatorname{per}(A)+\operatorname{per}(B)+\operatorname{per}(C)=0.\]

Now if we add one more directed edge from each vertex $X_i (i=1, 2)$ to each vertex $Y_j (j=1, 2)$ in the directed graph in Figure \ref{fig:f2} with weight $q_{i,j}$ and applying above argument we can prove Costas-Santos's identity  
\[\sum_{k=0}^N(-1)^k\sum_{\substack{ \sigma\subset[N]\\|\sigma|=k}}\det\left(Q+\sum_{i\in \sigma}A_i\right)=0.\]
Similarly we can prove the new identity 
\[\sum_{k=0}^N(-1)^k\sum_{\substack{ \sigma\subset[N]\\|\sigma|=k}}\operatorname{per}\left(Q+\sum_{i\in \sigma}A_i\right)=0.\]
\end{proof}

\subsection*{Acknowledgement} We would like to thank Prof. Arvind Ayyer and Prof. Darij Grinberg for valuable  suggestions in the preparation of this paper. The second author was supported by Department of Science and Technology grant EMR/2016/006624 and partly supported by  UGC Centre for Advanced Studies.

\bibliographystyle{amsplain}
\bibliography{gen-inv-lcp}

\providecommand{\bysame}{\leavevmode\hbox to3em{\hrulefill}\thinspace}
\providecommand{\MR}{\relax\ifhmode\unskip\space\fi MR }
% \MRhref is called by the amsart/book/proc definition of \MR.
\providecommand{\MRhref}[2]{%
  \href{http://www.ams.org/mathscinet-getitem?mr=#1}{#2}
}
\providecommand{\href}[2]{#2}
\begin{thebibliography}{1}

\bibitem{15}
M.~Aigner, \emph{A course in enumeration}, Graduate Texts in Mathematics, vol.
  238, Springer, Berlin, 2007.

\bibitem{14}
S.~Bera and S.~K. Mukherjee, \emph{Combinatorial proofs of some determinantal
  identities}, Linear Multilinear Algebra \textbf{66} (2018), no.~8,
  1659--1667.

\bibitem{16}
A.~R. Brualdi and D.~Cvetkovic, \emph{A combinatorial approach to matrix theory
  and its application}, Discrete Mathematics and Its Applications, vol.~52, CRC
  Press, Boca Raton, London, New York, 2009.

\bibitem{13}
R.~Chapman, \emph{A sparse determinant}, American Mathematical Monthly
  \textbf{109} (2002), no.~7, 666--667.

\bibitem{12}
R.~S. Costas-Santos, \emph{On the elementary symmetric functions of a sum of
  matrices}, Journal of Algebra, Number Theory: Advances and Applications
  \textbf{1} (2009), no.~2, 99--112.

\bibitem{09}
O.~Knill, \emph{Cauchy-{B}inet for pseudo-determinants}, Linear Algebra Appl.
  \textbf{459} (2014), 522--547.

\bibitem{10}
R.~P. Stanley, \emph{A matrix for counting paths in acyclic digraphs}, J.
  Combin. Theory Ser. A \textbf{74} (1996), no.~1, 169--172.

\bibitem{11}
D.~Zeilberger, \emph{A combinatorial proof of newton's identity}, Discrete
  Mathematics \textbf{49} (1984), 319.

\end{thebibliography}

\end{document}